\documentclass[11pt]{article}

\usepackage{amsmath,amssymb,amsthm}
\usepackage[a4paper,textwidth=15cm,top=3cm,bottom=3cm]{geometry}
\usepackage{url}
\usepackage[hidelinks]{hyperref}
\hypersetup{pdftitle={Arbitrary Stage Spectra for Raikov Remainders Along Quotient Towers}, pdfauthor={Xing-Yu Hu}, pdfsubject={General topology and topological groups}, pdfkeywords={topological group, Raikov completion, Raikov remainder, quotient homomorphism, quotient tower, pseudocompactness, Boolean group, stage spectrum}}

\newcommand{\doi}[1]{doi: \url{#1}}

\newtheorem{theorem}{Theorem}[section]
\newtheorem{proposition}[theorem]{Proposition}
\newtheorem{corollary}[theorem]{Corollary}
\theoremstyle{definition}
\newtheorem{example}[theorem]{Example}

\title{Arbitrary Stage Spectra for Raikov Remainders Along Quotient Towers}
\newcommand{\shorttitle}{Stage Spectra of Raikov Remainders}

\author{Xing-Yu Hu\thanks{Corresponding author.}\\
{\small School of Mathematics and Statistics, Hanjiang Normal University}\\
{\small Shiyan, Hubei 442000, China}\\
{\small \texttt{huxingyu@hjnu.edu.cn}}}
\date{}

\begin{document}
\pagestyle{myheadings}
\markboth{X.-Y. Hu}{\shorttitle}
\maketitle

\begin{abstract}
Let $\theta$ be an ordinal and let $(N_\alpha)_{\alpha\leq\theta}$ be a continuous increasing tower of closed normal subgroups of a Hausdorff topological group $G$, with $N_0=\{e\}$. Here continuity means that each limit term is the closure in $G$ of the union of all preceding terms. Write $\rho G$ for the Raikov completion of $G$ and put $H_\alpha=G\overline{N_\alpha}^{\rho G}$. The stage spectrum consists of the ordinals $\gamma\in(0,\theta]$ at which some point of $\rho G\setminus G$ first enters $H_\gamma$. For every nonzero ordinal $\theta$ and every $S\subseteq(0,\theta]$, there is a Hausdorff pseudocompact Boolean group $G$ with a continuous tower of closed subgroups, each pseudocompact, whose stage spectrum is exactly $S$. The construction can be arranged so that $G/N_\theta\cong\mathbb Z/2\mathbb Z$ and, for every $\delta<\theta$, the factor $N_{\delta+1}/N_\delta$ is compact exactly when $\delta+1\notin S$. In particular, the stage spectrum can consist of a single limit ordinal even when every successive factor is compact.

For any Hausdorff topological group $G$ and closed normal subgroup $N$, let $\widehat q:\rho G\to\rho(G/N)$ extend the quotient homomorphism and put $K=\overline N^{\rho G}$. The part of the fiber over $gN\in G/N$ that lies in $\rho G\setminus G$ is $g(K\setminus N)$, homeomorphic to $\rho N\setminus N$. For $y\in\widehat q(\rho G)\setminus(G/N)$, the entire fiber lies in $\rho G\setminus G$ and is homeomorphic to $\rho N$. When $N$ is precompact, these fibers give a decomposition of $\rho G\setminus G$. For nested quotients, the preimage formula gives a two-stage identity. If $N$ is compact, the remainders $\rho G\setminus G$ and $\rho(G/N)\setminus(G/N)$ are pseudocompact simultaneously. In two examples with countable precompact Abelian groups $G$ and precompact but noncompact kernels $N$, the remainder $\rho G\setminus G$ is pseudocompact in both cases, whereas $\rho(G/N)\setminus(G/N)$ is pseudocompact in only one.
\end{abstract}

\noindent
2020 Mathematics Subject Classification. 22A05, 54D35, 54D40.

\noindent
Key words and phrases. Topological group, Raikov completion, Raikov remainder, quotient homomorphism, quotient tower, pseudocompactness, Boolean group, stage spectrum.

\section{Introduction}

All topological groups are Hausdorff. The empty space is regarded as pseudocompact. Each group is viewed as a dense subgroup of its Raikov completion via the canonical embedding.

For a Hausdorff topological group $G$, let $\rho G$ denote its Raikov completion and put
\[
 r_\rho(G)=\rho G\setminus G.
\]
This completion remainder is called the Raikov remainder of $G$. Earlier work on remainders of topological groups includes Arhangel'skii's pseudocompact--Lindel\"of dichotomy for arbitrary compactification remainders and subsequent results on remainder duality and completeness-type properties \cite{Arhangelskii2008Remainders,Arhangelskii2009Remainders,ArhangelskiiChoban2019}. Arhangel'skii and Choban studied Raikov remainders in the broader setting of group extensions. For a dense subgroup $G$ of a topological group $H$, the set $H\setminus G$ is called a group-remainder of $G$. In particular, they showed that every nonempty group-remainder is dense in its ambient group \cite[Proposition~1.2]{ArhangelskiiChoban2018}.

Let $N$ be a closed normal subgroup of $G$, let $q:G\to G/N$ be the quotient homomorphism, and let $\widehat q:\rho G\to\rho(G/N)$ be its continuous homomorphic extension \cite[Corollary~3.6.17]{ArhangelskiiTkachenko2008}. The map $\widehat q$ is open onto its image, and its kernel is $\overline N^{\rho G}$ \cite[Theorem~3.6.19]{ArhangelskiiTkachenko2008}. This kernel identity determines which points of $\rho G\setminus G$ map into the original quotient group $G/N$. If $N$ is precompact, then $\widehat q$ is onto \cite[Proposition~3.7.19]{ArhangelskiiTkachenko2008}. Surjectivity need not hold for arbitrary closed normal subgroups. Leischner gave an example of an incomplete quotient of a complete topological group and later studied quotients of Raikov-complete groups \cite{Leischner1991,Leischner1993}.

Put $K=\overline N^{\rho G}$. Since $K$ is a Raikov completion of $N$, the fiber of $\widehat q$ over $gN\in G/N$ meets $r_\rho(G)$ in $g(K\setminus N)$, a translated copy of $r_\rho(N)$. If $y\in\widehat q(\rho G)\setminus(G/N)$, the entire fiber over $y$ lies in $r_\rho(G)$ and is homeomorphic to $\rho N$. Thus the part of the completion outside $G$ that maps into $G/N$ depends on whether $N$ is Raikov complete. In the complete case these fibers are empty. Otherwise they are nonempty over every point of $G/N$.

For a precompact kernel, the completion extension is automatically onto. The fiber dichotomy then gives a more detailed decomposition. The set $r_\rho(G)$ splits into points mapped by $\widehat q$ into $G/N$ and points mapped into $r_\rho(G/N)$. The first part has fibers homeomorphic to $r_\rho(N)$, while the second part has fibers homeomorphic to $\rho N$. When neither $N$ nor $G/N$ is Raikov complete, both parts are dense in $\rho G$ by the classical density theorem for nonempty group-remainders \cite[Proposition~1.2]{ArhangelskiiChoban2018}.

The decomposition extends to nested quotients. For a closed normal subgroup $N$, set $D_G(N)=\widehat q_N^{-1}(G/N)\setminus G$, where $\widehat q_N$ is the completion extension for the quotient by $N$. Thus $D_G(N)$ consists of points outside $G$ that map into $G/N$. For nested closed normal subgroups $N\subseteq M$, functoriality of the completion extensions and the preimage formula in Theorem~\ref{thm:image-dichotomy} give a two-stage identity. The set $D_G(M)$ splits into points already belonging to $D_G(N)$ and points that enter after the further quotient by $M/N$. Iterating this identity along a normal quotient tower starting from the trivial subgroup gives a transfinite first-entry stratification. Successor strata come from the successive quotient maps. At a limit stage, new points may enter upon taking the closure in $\rho G$ of the union of the earlier subgroup closures. The identity concerns remainder fibers, not exactness of the Raikov-completion functor.

Every subset of the nonzero stages occurs as the stage spectrum of a continuous tower in a pseudocompact Boolean group, with every member of the tower pseudocompact. For these towers, a successive factor is compact precisely when its stage is absent from the prescribed spectrum. Peng studied a different, cardinal-valued quotient-weight spectrum for precompact Abelian groups and showed that such spectra can be arbitrary subsets of a cardinal interval, with pseudocompact realizations in part of that range \cite{Peng2023}. Peng and Zhou study density spectra associated with dense and closed subgroups of topological groups \cite{Peng2026}.

Dense subgroups of compact Boolean products are standard ingredients in constructions of pseudocompact groups. A closely related classical construction is due to Comfort and Soundararajan. For a prime $p$ and an uncountable cardinal $\alpha$, they obtain a dense pseudocompact subgroup of $(\mathbb Z/p\mathbb Z)^\alpha$ of index $p$ by starting from the countable-support subgroup $\Sigma$ \cite[Lemma~4.2]{ComfortSoundararajan1982}. Under $2^{\omega_1}=\mathfrak c$, Leiderman and Tkachenko construct pseudocompact Abelian groups whose closed subgroups are separable \cite[Theorem~3.4]{LeidermanTkachenko2018}. Gabriyelyan, van Mill, and Reznichenko likewise use $\Sigma$-product methods in pseudocompact-group constructions \cite[Theorem~5.1]{GabriyelyanVanMillReznichenko2026}. They also state the standard Boolean-group fact that maps from independent subsets to $\mathbb Z/2\mathbb Z$ extend to homomorphisms \cite[Fact~4.2]{GabriyelyanVanMillReznichenko2026}. The proof of Theorem~\ref{thm:arbitrary-spectrum} uses an algebraic homomorphism from a compact Boolean product to encode the prescribed first-entry stages. The kernel contains the countable-support subgroup, and its intersections with coordinate subproducts are used to construct the pseudocompact tower subgroups.

Related work concerns the behavior of completeness and completion under quotients. Morales and Tkachenko study strong realcompactness and strong Dieudonn\'e completeness, defined by closed embeddings into products of separable metrizable groups and of metrizable groups, respectively. Their quotient results require a \v{C}ech-complete normal subgroup, together with $\omega$-narrowness of the ambient group in the first case or $\omega$-balancedness in the second \cite[Theorem~4.5]{MoralesTkachenko2016}. For an Abelian group $G$ and a closed subgroup $N$ whose Raikov completion is \v{C}ech-complete, Bello, Chasco, Dom\'inguez, and Tkachenko prove that $\rho(G/N)\cong\rho G/\overline N^{\rho G}$ as topological groups \cite[Proposition~3.9]{BelloChascoDominguezTkachenko2016}. Under the same hypothesis, they also prove that $1\to\rho N\to\rho G\to\rho(G/N)\to1$ is an extension of topological Abelian groups \cite[Proposition~3.10]{BelloChascoDominguezTkachenko2016}. Arnautov and Ermakova study completeness of quotients by discrete, compact, and locally compact subgroups \cite{ArnautovErmakova2024,ArnautovErmakova2025}. Braunling and Ren prove that completion preserves exact sequences in the category of locally precompact Hausdorff Abelian groups \cite[Proposition~6.5]{BraunlingRen2025}. The present paper studies the fibers of $\widehat q$ on $r_\rho(G)$ and first-entry stages along quotient towers.

For compact $N$, the restriction $\widehat q|_{r_\rho(G)}:r_\rho(G)\to r_\rho(G/N)$ is an open perfect surjection, so these two remainders are pseudocompact simultaneously. The perfectness of quotient maps by compact subgroups is classical in the theory of coset spaces \cite{Arhangelskii2017}. Arhangel'skii and Choban also use open perfect restrictions arising from compact-subgroup quotients in their study of group-remainders \cite{ArhangelskiiChoban2018}. Their later work treats properties of group-remainders and their perfect images in a broader mapping setting \cite{ArhangelskiiChoban2021}.

If $r_\rho(G)$ is nonempty and pseudocompact, its density in $\rho G$ forces $\rho G$ to be pseudocompact. The compactness theorem for Raikov completions of pseudocompact groups then implies that $\rho G$ is compact \cite[Corollary~3.7.18]{ArhangelskiiTkachenko2008}. Hence $G$ is precompact, and the precompact-kernel dichotomy applies to every closed normal subgroup of $G$.

Examples~\ref{ex:preserve} and~\ref{ex:destroy} use the Bohr topology on $\mathbb Z$ and a result of Arhangel'skii and Bella stated in the abstract of their paper \cite{ArhangelskiiBella2001}. In that abstract, a group is called pseudocompact at infinity when every compactification remainder is pseudocompact. In both examples, $\widehat q(r_\rho(G))=\rho(G/N)$, whereas $r_\rho(G/N)$ is pseudocompact in the first example and not in the second.

\section{Quotient Fibers and Canonical Decomposition}

\subsection{Fiber structure and the quotient image dichotomy}

For the quotient homomorphism $q:G\to G/N$, let
\[
 \widehat q:\rho G\longrightarrow\rho(G/N)
\]
be its extension, and put
\[
 K=\overline N^{\rho G}
 \qquad\text{and}\qquad
 I=\widehat q(\rho G).
\]
By the classical completion theorem, $\widehat q$ is open onto $I$ and $\ker\widehat q=K$ \cite[Theorem~3.6.19]{ArhangelskiiTkachenko2008}. Since $K$ is a closed subgroup of the Raikov-complete group $\rho G$, it is Raikov complete \cite[Exercise~3.6.m]{ArhangelskiiTkachenko2008}. The subgroup $N$ is dense in $K$, and the subgroup-uniformity and uniqueness results show that this inclusion realizes $K$ as a Raikov completion of $N$ \cite[Proposition~1.8.4 and Theorem~3.6.14]{ArhangelskiiTkachenko2008}.

\begin{theorem}\label{thm:image-dichotomy}
Let $N$ be a closed normal subgroup of a topological group $G$. With $q$, $K$, $I$, and $\widehat q$ as above,
\[
 \widehat q^{-1}(G/N)=GK.
\]
The restriction of $\widehat q$ to $r_\rho(G)$ has the following fiber structure.
\begin{enumerate}
\item If $y=q(g)\in G/N$, then
\[
 \bigl(\widehat q|_{r_\rho(G)}\bigr)^{-1}(y)
 =g(K\setminus N),
\]
which is homeomorphic to $r_\rho(N)$.
\item If $y\in I\setminus(G/N)$ and $\widehat q(x)=y$, then
\[
 \bigl(\widehat q|_{r_\rho(G)}\bigr)^{-1}(y)=xK,
\]
which is homeomorphic to $\rho N$.
\end{enumerate}
Consequently, the image is given by the following two cases.
\begin{enumerate}
\item If $N$ is Raikov complete, then
\[
 \widehat q\bigl(r_\rho(G)\bigr)=I\setminus(G/N).
\]
\item If $N$ is not Raikov complete, then
\[
 \widehat q\bigl(r_\rho(G)\bigr)=I.
\]
\end{enumerate}
In particular,
\[
 r_\rho(G/N)=\widehat q\bigl(r_\rho(G)\bigr)
\]
holds if and only if $N$ is Raikov complete and $\widehat q$ is onto.
\end{theorem}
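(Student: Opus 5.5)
The plan is to first prove the preimage formula and then read off the fiber description from it; the image dichotomy and the final equivalence will follow by bookkeeping.

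For the preimage formula $\widehat q^{-1}(G/N)=GK$: since $\widehat q$ extends $q$, we have $\widehat q(g)=q(g)\in G/N$ for $g\in G$. Since $\ker\widehat q=K$, we get $\widehat q(gk)=q(g)$, so $GK\subseteq\widehat q^{-1}(G/N)$. Conversely, if $\widehat q(x)=q(g)=\widehat q(g)$, then $g^{-1}x\in\ker\widehat q=K$, so $x\in gK\subseteq GK$. The same computation shows that the full fiber of $\widehat q$ over $q(g)$ is exactly $gK$, and that the full fiber over any $y=\widehat q(x)\in I$ is the coset $xK$.

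Next I intersect these fibers with $r_\rho(G)$.

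\emph{Case 1: $y=q(g)$.} I claim $gK\cap G=gN$. Indeed, if $gk\in G$ then $k\in K\cap G$. Since $N$ is closed in $G$ and $K$ is the closure of $N$ in $\rho G$, we have $K\cap G=\overline N^{\rho G}\cap G=\overline N^{G}=N$. Hence the fiber in $r_\rho(G)$ is $gK\setminus gN=g(K\setminus N)$. Left translation by $g$ is a homeomorphism of $\rho G$, so this set is homeomorphic to $K\setminus N$. As recalled before the statement, $K$ is a Raikov completion of $N$ with $N$ embedded as the canonical dense subgroup, so $K\setminus N$ is $r_\rho(N)$. The one point needing care is that the identification $K\cong\rho N$ fixes $N$ pointwise, so that $K\setminus N$ corresponds to $\rho N\setminus N$; this is exactly what the uniqueness theorem \cite[Theorem~3.6.14]{ArhangelskiiTkachenko2008} provides.

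\emph{Case 2: $y\in I\setminus(G/N)$.} Every $z\in xK$ satisfies $\widehat q(z)=y\notin G/N=\widehat q(G)$, so $z\notin G$. Thus the whole coset $xK$ lies in $r_\rho(G)$. It is homeomorphic via translation to $K$, which is homeomorphic to $\rho N$.

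For the image: $\widehat q(r_\rho(G))$ always contains $I\setminus(G/N)$, because those fibers are nonempty cosets lying in $r_\rho(G)$. It meets $G/N$ exactly when $K\setminus N\neq\emptyset$, that is, when $N\neq K$, and in that case it contains all of $G/N$. Now $N=K$ is equivalent to $N$ being Raikov complete. If $N=K$, then $N$ is complete as a closed subgroup of the Raikov-complete group $\rho G$. Conversely, if $N$ is Raikov complete, it is closed in every group containing it, so $N=\overline N^{\rho G}=K$. This gives both cases of the image statement.

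For the final equivalence, note that $r_\rho(G/N)=\rho(G/N)\setminus(G/N)$. If $N$ is complete and $I=\rho(G/N)$, then equality holds by the first image case. Conversely, suppose equality holds. Then the image misses $G/N$, since $G/N$ is disjoint from $r_\rho(G/N)$, so $N$ is complete by the second case. The first case then gives $I\setminus(G/N)=\rho(G/N)\setminus(G/N)$. Since $G/N\subseteq I$ always holds, this forces $I=\rho(G/N)$, so $\widehat q$ is onto.

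I expect the only delicate step to be the identification $K\cap G=N$ and the canonical identification of $K\setminus N$ with $r_\rho(N)$. Everything else is coset arithmetic.
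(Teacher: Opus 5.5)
Your proposal is correct and follows essentially the same route as the paper. You use the kernel identity $\ker\widehat q=K$ to get the preimage formula and the coset fibers, and $K\cap G=N$ (from closedness of $N$ in $G$) to get the restricted fibers; the image dichotomy and final equivalence then come from whether $K\setminus N$ is empty, with your justification that $N=K$ exactly when $N$ is Raikov complete merely making explicit a step the paper states without comment.
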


\begin{proof}
By the classical facts above, $\ker\widehat q=K$ and $K$ is a Raikov completion of $N$. If $x\in\widehat q^{-1}(G/N)$, choose $g\in G$ such that $\widehat q(x)=q(g)$. Then $g^{-1}x\in K$, so $x\in GK$. The reverse inclusion follows from $\widehat q(K)=\{e\}$. Hence
\[
 \widehat q^{-1}(G/N)=GK.
\]
Because $N$ is closed in $G$,
\[
 K\cap G=N.
\]

Fix $y=q(g)\in G/N$. The full $\widehat q$-fiber over $y$ is $gK$. Its intersection with $G$ is $gN$. Therefore
\[
 \bigl(\widehat q|_{r_\rho(G)}\bigr)^{-1}(y)
 =gK\setminus gN
 =g(K\setminus N).
\]
Left translation by $g$ is a homeomorphism, and under the completion isomorphism $K\cong\rho N$ fixing $N$, the set $K\setminus N$ corresponds to $r_\rho(N)$.

Now let $y\in I\setminus(G/N)$ and choose $x\in\rho G$ with $\widehat q(x)=y$. The full fiber is $xK$. If $xk\in G$ for some $k\in K$, then
\[
 y=\widehat q(xk)\in G/N,
\]
a contradiction. Hence $xK\subseteq r_\rho(G)$, and the restricted fiber equals $xK$, which is homeomorphic to $K$ and therefore to $\rho N$.

If $N$ is Raikov complete, then $K=N$ and the fibers over points of $G/N$ are empty. Hence $\widehat q(r_\rho(G))=I\setminus(G/N)$. If $N$ is not Raikov complete, then $K\setminus N$ is nonempty. Every point of $G/N$ is then in $\widehat q(r_\rho(G))$, and every point of $I\setminus(G/N)$ is covered by the second fiber formula. Hence $\widehat q(r_\rho(G))=I$.

The final equivalence follows from the two cases. When $N$ is Raikov complete, $\widehat q(r_\rho(G))=r_\rho(G/N)$ exactly when $I=\rho(G/N)$. When $N$ is not Raikov complete, the image contains $G/N$, so it cannot equal $r_\rho(G/N)$.
\end{proof}

If $N$ is precompact, then $\widehat q$ is onto \cite[Proposition~3.7.19]{ArhangelskiiTkachenko2008}, and a precompact group is Raikov complete exactly when it is compact \cite[Theorem~3.7.15]{ArhangelskiiTkachenko2008}.

\begin{corollary}\label{cor:precompact-kernel}
Let $N$ be a closed normal precompact subgroup of a topological group $G$, and let $\widehat q:\rho G\to\rho(G/N)$ be the completion extension of the quotient homomorphism. Then
\[
 \widehat q\bigl(r_\rho(G)\bigr)=
 \begin{cases}
 r_\rho(G/N),&\text{if $N$ is compact},\\[4pt]
 \rho(G/N),&\text{if $N$ is not compact}.
 \end{cases}
\]
In particular,
\[
 r_\rho(G/N)=\widehat q\bigl(r_\rho(G)\bigr)
\]
if and only if $N$ is compact.
\end{corollary}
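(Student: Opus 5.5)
The corollary follows from Theorem~\ref{thm:image-dichotomy} once two classical facts about precompact groups are in place. The first is that $\widehat q$ is onto when $N$ is precompact \cite[Proposition~3.7.19]{ArhangelskiiTkachenko2008}. The second is that a precompact group is Raikov complete exactly when it is compact \cite[Theorem~3.7.15]{ArhangelskiiTkachenko2008}. The plan is to record these two facts, substitute them into the two cases of the image dichotomy, and then read off the final equivalence.

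First, since $N$ is precompact, $I=\widehat q(\rho G)=\rho(G/N)$, so $I\setminus(G/N)=r_\rho(G/N)$. Next I split into two cases.

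If $N$ is compact, then $N$ is Raikov complete. Case~1 of the theorem then gives
\[
 \widehat q\bigl(r_\rho(G)\bigr)=I\setminus(G/N)=r_\rho(G/N).
\]

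If $N$ is not compact, then $N$ is precompact but not compact, hence not Raikov complete. Case~2 of the theorem then gives
\[
 \widehat q\bigl(r_\rho(G)\bigr)=I=\rho(G/N).
\]

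For the ``in particular'' clause, I will apply the final equivalence of Theorem~\ref{thm:image-dichotomy}. It says that equality $r_\rho(G/N)=\widehat q(r_\rho(G))$ holds if and only if $N$ is Raikov complete and $\widehat q$ is onto. Surjectivity is automatic here, and Raikov completeness of $N$ is equivalent to compactness. Alternatively, one can argue directly from the two cases just proved: in the noncompact case the image $\rho(G/N)$ contains $G/N\neq\emptyset$, while $r_\rho(G/N)$ is disjoint from $G/N$, so equality fails.

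The only step needing care is the identification $I\setminus(G/N)=r_\rho(G/N)$. It relies on surjectivity and on $G/N$ being viewed as a subgroup of $\rho(G/N)$ via the canonical embedding, which is compatible with $\widehat q$ extending $q$. Everything else is routine bookkeeping.
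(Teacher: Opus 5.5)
Your proposal is correct and matches the paper's proof: the paper likewise substitutes the two classical facts (surjectivity of $\widehat q$ for precompact $N$, and Raikov completeness of a precompact group being equivalent to compactness) into the two image cases and the final equivalence of Theorem~\ref{thm:image-dichotomy}. Your identification of $I\setminus(G/N)$ with $r_\rho(G/N)$ via surjectivity is exactly the bookkeeping the paper leaves implicit.
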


\begin{proof}
Apply Theorem~\ref{thm:image-dichotomy} using the two classical facts above.
\end{proof}

\subsection{Canonical decomposition for precompact kernels}

For a precompact kernel, the fiber theorem determines the image of $r_\rho(G)$ under the extended quotient map and a canonical intermediate group between $G$ and its Raikov completion.

Let $N$ be a closed normal precompact subgroup of $G$, put
\[
 Q=G/N,
 \qquad
 K=\overline N^{\rho G},
\]
and let $q:G\to Q$ be the quotient homomorphism and $\widehat q:\rho G\to\rho Q$ its completion extension. The map $\widehat q$ is onto \cite[Proposition~3.7.19]{ArhangelskiiTkachenko2008}. Since $K$ is a Raikov completion of the precompact group $N$, it is compact \cite[Theorem~3.7.15]{ArhangelskiiTkachenko2008}. Define
\[
 H_N=\widehat q^{-1}(Q)=GK.
\]
Set
\[
 L_N=H_N\setminus G
 \qquad\text{and}\qquad
 T_N=\rho G\setminus H_N.
\]
Thus $L_N$ consists of points of $r_\rho(G)$ mapped into $Q$ by $\widehat q$, whereas $T_N$ is the full inverse image of $r_\rho(Q)$.

\begin{theorem}\label{thm:canonical-decomposition}
Let $N$ be a closed normal precompact subgroup of a topological group $G$. With the notation above, the following assertions hold.
\begin{enumerate}
\item The group $H_N$ is a dense intermediate subgroup
\[
 G\subseteq H_N\subseteq\rho G,
\]
and $\rho G$ is a Raikov completion of $H_N$. With this choice of completion, $\rho H_N=\rho G$. The compact group $K$ is normal in $H_N$, and
\[
 H_N/K\cong Q.
\]
\item The Raikov remainder admits the canonical decomposition
\[
 r_\rho(G)=L_N\mathbin{\dot\cup}T_N,
\]
where
\[
 L_N=H_N\setminus G=GK\setminus G
\]
and
\[
 T_N=\rho G\setminus H_N=r_\rho(H_N).
\]
\item One has $H_N=G$ if and only if $N$ is compact. One has $H_N=\rho G$ if and only if $Q$ is Raikov complete.
\item If $N$ is not compact, then
\[
 \widehat q|_{L_N}:L_N\longrightarrow Q
\]
is a continuous open surjection. Every fiber is homeomorphic to $r_\rho(N)$.
\item If $Q$ is not Raikov complete, then
\[
 \widehat q|_{T_N}:T_N\longrightarrow r_\rho(Q)
\]
is an open perfect surjection. Every fiber is homeomorphic to $\rho N$.
\end{enumerate}
\end{theorem}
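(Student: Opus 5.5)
I will derive each item of Theorem~\ref{thm:canonical-decomposition} from Theorem~\ref{thm:image-dichotomy}, combined with three classical facts: surjectivity of $\widehat q$, compactness of $K$, and openness of $\widehat q$ onto its image.

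For item~1, $H_N=GK$ is a subgroup because $K=\ker\widehat q$ is normal in $\rho G$, so $GK=KG$. It is dense since it contains $G$. A dense subgroup of a Raikov-complete group has that group as its Raikov completion, by the uniqueness theorem already cited before Theorem~\ref{thm:image-dichotomy}; hence $\rho H_N=\rho G$. Normality of $K$ in $H_N$ is inherited from $\rho G$. The restriction $\widehat q|_{H_N}:H_N\to Q$ is a continuous surjective homomorphism with kernel $K$. It is open, because $H_N=\widehat q^{-1}(Q)$ and the restriction of an open map to a full preimage is open onto that image. Hence $H_N/K\cong Q$ as topological groups.

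For item~2, the sets $L_N=H_N\setminus G$ and $T_N=\rho G\setminus H_N$ partition $\rho G\setminus G$ because $G\subseteq H_N$. By item~1, $T_N=r_\rho(H_N)$.

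For item~3, note that $GK=G$ iff $K\subseteq G$ iff $K=K\cap G=N$. Since $K$ is compact and $N$ is closed in $G$, this happens iff $N$ is compact; conversely, compact $N$ is closed in $\rho G$, so $K=N$. Next, $H_N=\rho G$ iff $\widehat q^{-1}(Q)=\widehat q^{-1}(\rho Q)$ iff $Q=\rho Q$, using surjectivity of $\widehat q$.

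For item~4, assume $N$ is not compact. Then $K\setminus N\neq\emptyset$, so the fiber $g(K\setminus N)$ from Theorem~\ref{thm:image-dichotomy} is nonempty and $\widehat q|_{L_N}$ is onto $Q$. Each fiber is homeomorphic to $r_\rho(N)$ by that theorem. Openness is the one nontrivial point, since $L_N$ is not saturated. Take $U$ open in $\rho G$; I will show $\widehat q(U\cap L_N)=\widehat q(U\cap H_N)$. Given $x\in U\cap H_N$, the set $xK\cap U$ is a nonempty open subset of $xK\cong K$. The subset $xK\cap G$ is a coset of $N$ in $xK$, and it is nowhere dense there: $N$ is a proper dense subgroup of $K$ (dense because $K=\overline N$, proper because $N$ is not compact), and a proper dense subgroup has empty interior. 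So $xK\cap U$ contains a point of $L_N$ with the same image as $x$. Finally $\widehat q(U\cap H_N)$ is open in $Q$ by item~1, so $\widehat q|_{L_N}$ is open.

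For item~5, assume $Q$ is not Raikov complete. Then $T_N=\widehat q^{-1}(r_\rho(Q))$ is a full preimage, so the restriction is open and, by surjectivity, onto $r_\rho(Q)$. The map $\widehat q$ is perfect because it is a quotient map by the compact subgroup $K$; this is the classical perfectness fact cited in the introduction. The restriction of a perfect map to a full preimage is again perfect. The fibers are $xK\cong\rho N$ by Theorem~\ref{thm:image-dichotomy}.

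The main obstacle is the openness in item~4, and it rests on the nowhere-density of $N$ in $K$ established above.
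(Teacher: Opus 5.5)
Your proof is correct and follows the paper's argument essentially step by step: every item is read off from Theorem~\ref{thm:image-dichotomy}. Openness in (4) comes from $K\setminus N$ being dense in $K$, so that every nonempty open piece of a fiber $gK$ meets $g(K\setminus N)$, and (5) uses perfectness of $\widehat q$ via its compact kernel, restricted to the full preimage $T_N$. One wording fix: $N$ is dense in $K$, so it is not nowhere dense there; what you actually prove and use is that the coset $xK\cap G=gN$ has empty interior in $xK$, which is all the argument needs.
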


\begin{proof}
The equality $H_N=GK$ follows from Theorem~\ref{thm:image-dichotomy}. Since $G\subseteq H_N\subseteq\rho G$ and $G$ is dense in $\rho G$, the group $H_N$ is dense in $\rho G$. The two-sided uniformity on a subgroup is the restriction of the ambient two-sided uniformity \cite[Proposition~1.8.4]{ArhangelskiiTkachenko2008}. Hence $\rho G$ is a Raikov completion of $H_N$ \cite[Theorem~3.6.14]{ArhangelskiiTkachenko2008}. With this choice, $\rho H_N=\rho G$.

The kernel of $\widehat q|_{H_N}$ is $K$, and the restriction is an open surjection from $H_N$ onto $Q$. Thus $H_N/K\cong Q$. The definitions now give
\[
 L_N=H_N\setminus G
\]
and
\[
 T_N=\rho G\setminus H_N=r_\rho(H_N),
\]
which proves the decomposition of $r_\rho(G)$.

By Theorem~\ref{thm:image-dichotomy}, the set $L_N$ is empty exactly when the precompact group $N$ is Raikov complete. This is equivalent to compactness of $N$. The set $T_N$ is empty exactly when $r_\rho(Q)$ is empty, which is equivalent to Raikov completeness of $Q$. This proves the two endpoint characterizations.

Suppose that $N$ is not compact. The fiber formula over $Q$ in Theorem~\ref{thm:image-dichotomy} gives
\[
 \bigl(\widehat q|_{L_N}\bigr)^{-1}(q(g))
 =g(K\setminus N)
 \cong r_\rho(N).
\]
It remains to check openness. Let $U=O\cap L_N$, where $O$ is open in $\rho G$. If $y\in\widehat q(O)\cap Q$, then the full fiber over $y$ is a coset $gK$ meeting $O$. The group $N$ is a proper dense subgroup of $K$. A proper dense subgroup has empty interior, so $K\setminus N$ is dense in $K$. Hence the nonempty open set $O\cap gK$ meets $g(K\setminus N)$. Therefore
\[
 \widehat q(U)=\widehat q(O)\cap Q,
\]
which is open in $Q$.

For assertion (5), assume that $Q$ is not Raikov complete. The open surjection $\widehat q$ has compact kernel $K$, so the induced topological isomorphism $(\rho G)/K\cong\rho Q$ and the compact-subgroup quotient theorem show that $\widehat q$ is perfect \cite[Theorem~1.5.7]{ArhangelskiiTkachenko2008}. The set $T_N$ is the full inverse image of $r_\rho(Q)$. The restriction of an open perfect map to the full inverse image of any subspace is again open and perfect. Its fibers are cosets of $K\cong\rho N$.
\end{proof}

If $N$ is not compact, then $L_N=H_N\setminus G$ is a nonempty group-remainder of $G$ in $H_N$ and is therefore dense in $H_N$ \cite[Proposition~1.2]{ArhangelskiiChoban2018}. Since $H_N$ is dense in $\rho G$, the set $L_N$ is dense in $\rho G$. If $Q$ is not Raikov complete, then $T_N=r_\rho(H_N)$ is a nonempty group-remainder and is dense in $\rho H_N=\rho G$ by the same theorem.

For precompact $N$, these conclusions give four cases. If $N$ is compact and $Q$ is Raikov complete, then $r_\rho(G)$ is empty. If exactly one of these two conditions fails, then $r_\rho(G)$ equals precisely one of $L_N$ and $T_N$. If $N$ is not compact and $Q$ is not Raikov complete, then
\[
 r_\rho(G)=L_N\mathbin{\dot\cup}T_N
\]
with both pieces dense in $\rho G$ and hence in $r_\rho(G)$. In the last case, $r_\rho(G)$ is therefore resolvable. The quotient map canonically selects the two dense pieces, and its restrictions to them have different fiber types.

\section{Quotient Towers and Stage Spectra}

\subsection{A two-stage identity for nested quotients}

For a closed normal subgroup $N$ of $G$, let $q_N:G\to G/N$ be the quotient homomorphism and let
\[
 \widehat q_N:\rho G\longrightarrow\rho(G/N)
\]
be its extension. Write
\[
 K_N=\overline N^{\rho G}
\]
and define
\[
 D_G(N)=\widehat q_N^{-1}(G/N)\setminus G=GK_N\setminus G.
\]
Thus $D_G(N)$ consists of the points of $r_\rho(G)$ whose images under $\widehat q_N$ belong to $G/N$. By Theorem~\ref{thm:image-dichotomy}, the set $D_G(N)$ is empty if and only if $N$ is Raikov complete.

\begin{theorem}\label{thm:tower-law}
Let $N\subseteq M$ be closed normal subgroups of a topological group $G$. Use the canonical topological isomorphism
\[
 (G/N)/(M/N)\cong G/M
\]
and regard the second quotient as $G/M$. Let
\[
 q_{M/N}:G/N\longrightarrow G/M
\]
be the induced quotient homomorphism and put
\[
 K_{M/N}=\overline{M/N}^{\,\rho(G/N)}.
\]
Let
\[
 a=\widehat q_N:\rho G\to\rho(G/N)
\]
and let
\[
 b=\widehat q_{M/N}:\rho(G/N)\to\rho(G/M)
\]
be the completion extension of $q_{M/N}$. Then the extension of the direct quotient by $M$ is $b\circ a$. Also,
\[
 a^{-1}\bigl(K_{M/N}\bigr)=K_M
\]
and
\[
 a^{-1}\bigl(D_{G/N}(M/N)\bigr)
 =GK_M\setminus GK_N.
\]
Consequently,
\[
 D_G(M)
 =D_G(N)\mathbin{\dot\cup}
 a^{-1}\bigl(D_{G/N}(M/N)\bigr).
\]
\end{theorem}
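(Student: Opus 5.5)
The first claim follows from uniqueness of continuous homomorphic extensions. The composite $b\circ a$ is a continuous homomorphism $\rho G\to\rho(G/M)$. On $G$ it restricts to $q_{M/N}\circ q_N=q_M$ under the identification $(G/N)/(M/N)\cong G/M$. Since $G$ is dense in $\rho G$ and $\rho(G/M)$ is Hausdorff, $b\circ a=\widehat q_M$.

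The kernel identity then comes from Theorem~\ref{thm:image-dichotomy} applied twice. Applied to $M/N\subseteq G/N$, it gives $\ker b=K_{M/N}$; applied to $M\subseteq G$, it gives $\ker\widehat q_M=K_M$. Hence
\[
 a^{-1}(K_{M/N})=a^{-1}(\ker b)=\ker(b\circ a)=K_M .
\]
This is the step I would watch most carefully. It depends on the identification of $\rho\bigl((G/N)/(M/N)\bigr)$ with $\rho(G/M)$ being compatible with the extensions. That compatibility holds because a topological isomorphism extends to a topological isomorphism of Raikov completions. The direct route avoids any exactness of the completion functor: $a^{-1}$ of a closure is computed through kernels, not through closures of images.

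For the preimage of $D_{G/N}(M/N)$, recall the definition
\[
 D_{G/N}(M/N)=(G/N)K_{M/N}\setminus (G/N).
\]
The first term has preimage
\[
 a^{-1}\bigl((G/N)K_{M/N}\bigr)=a^{-1}\bigl(b^{-1}(G/M)\bigr)=\widehat q_M^{-1}(G/M)=GK_M,
\]
by Theorem~\ref{thm:image-dichotomy} applied to $b$ and to $\widehat q_M$. The second term has preimage $a^{-1}(G/N)=GK_N$, by the same theorem for $a$. Since preimages commute with set differences, $a^{-1}\bigl(D_{G/N}(M/N)\bigr)=GK_M\setminus GK_N$.

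Finally, $N\subseteq M$ gives $K_N\subseteq K_M$, so $G\subseteq GK_N\subseteq GK_M$. Therefore
\[
 D_G(M)=GK_M\setminus G=(GK_N\setminus G)\mathbin{\dot\cup}(GK_M\setminus GK_N),
\]
and the first piece is $D_G(N)$. The union is disjoint because the second piece avoids $GK_N$. This yields the stated decomposition.
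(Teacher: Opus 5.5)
Your proof is correct and follows the paper's argument essentially step for step. You use uniqueness of continuous extensions from the dense subgroup $G$ to get $b\circ a=\widehat q_M$, compute $a^{-1}(\ker b)=\ker(b\circ a)=K_M$, and pull back $b^{-1}(G/M)=(G/N)K_{M/N}$ and $a^{-1}(G/N)=GK_N$ along $a$. Your explicit check that $GK_N\subseteq GK_M$, which the final disjoint decomposition needs, is a detail the paper leaves implicit.
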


\begin{proof}
The quotient homomorphisms satisfy
\[
 q_M=q_{M/N}\circ q_N.
\]
Both $\widehat q_M$ and $b\circ a$ are continuous homomorphisms from $\rho G$ to the Hausdorff group $\rho(G/M)$, and each restricts to $q_M$ on the dense subgroup $G$. Hence they coincide, so
\[
 \widehat q_M=b\circ a.
\]
The kernel-closure theorem gives
\[
 a^{-1}(K_{M/N})
 =a^{-1}(\ker b)
 =\ker(b\circ a)
 =\ker\widehat q_M
 =K_M.
\]

Applying Theorem~\ref{thm:image-dichotomy} to the second quotient gives
\[
 b^{-1}(G/M)=(G/N)K_{M/N}.
\]
Taking the inverse image under $a$ and using $b\circ a=\widehat q_M$ gives
\[
 a^{-1}\bigl((G/N)K_{M/N}\bigr)
 =\widehat q_M^{-1}(G/M)
 =GK_M.
\]
For the first quotient, Theorem~\ref{thm:image-dichotomy} gives
\[
 a^{-1}(G/N)=GK_N.
\]
Since inverse images commute with set difference,
\[
\begin{aligned}
 a^{-1}\bigl(D_{G/N}(M/N)\bigr)
 &=a^{-1}\bigl((G/N)K_{M/N}\setminus(G/N)\bigr)\\
 &=GK_M\setminus GK_N.
\end{aligned}
\]
The remaining set difference decomposes as
\[
 GK_M\setminus G
 =\bigl(GK_N\setminus G\bigr)
 \mathbin{\dot\cup}
 \bigl(GK_M\setminus GK_N\bigr),
\]
which is the displayed formula.
\end{proof}

The identity distinguishes points of $D_G(M)$ according to whether their images under $\widehat q_N$ belong to $G/N$. For the second term, these images lie in $r_\rho(G/N)$ but map into $G/M$ under the further quotient by $M/N$.

\subsection{Transfinite stratification}

\begin{theorem}\label{thm:transfinite-tower}
Let $\theta$ be an ordinal, let $G$ be a topological group, and let $(N_\alpha)_{\alpha\leq\theta}$ be an increasing chain of closed normal subgroups of $G$, where $N_0=\{e\}$. Put
\[
 K_\alpha=\overline{N_\alpha}^{\rho G},
 \qquad
 H_\alpha=GK_\alpha.
\]
For $0<\gamma\leq\theta$, define the $\gamma$-th first-entry stratum by
\[
 S_\gamma=
 \begin{cases}
 H_{\delta+1}\setminus H_\delta,&\text{if $\gamma=\delta+1$},\\[3pt]
 H_\gamma\setminus\displaystyle\bigcup_{\alpha<\gamma}H_\alpha,
     &\text{if $\gamma$ is a limit ordinal}.
 \end{cases}
\]
Then, for every $\beta\leq\theta$,
\[
 D_G(N_\beta)=\mathbin{\dot\bigcup}_{0<\gamma\leq\beta}S_\gamma.
\]
At a successor stage $\gamma=\delta+1$, if
\[
 a_\delta:\rho G\longrightarrow\rho(G/N_\delta)
\]
denotes the completion extension, then
\[
 S_{\delta+1}
 =a_\delta^{-1}\bigl(D_{G/N_\delta}(N_{\delta+1}/N_\delta)\bigr).
\]
If $\lambda\leq\theta$ is a limit ordinal and the chain is continuous there in the sense that
\[
 N_\lambda=\overline{\bigcup_{\alpha<\lambda}N_\alpha}^{\,G},
\]
then
\[
 K_\lambda
 =\overline{\bigcup_{\alpha<\lambda}K_\alpha}^{\,\rho G}.
\]

If, in addition, $N_\theta$ is precompact, then for every $\delta<\theta$,
\[
 S_{\delta+1}=\varnothing
 \quad\Longleftrightarrow\quad
 N_{\delta+1}/N_\delta\text{ is compact}.
\]
\end{theorem}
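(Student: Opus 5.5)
The plan is to prove the four assertions in the order stated, relying throughout on Theorem~\ref{thm:image-dichotomy}, Theorem~\ref{thm:tower-law}, and the monotonicity of the chain $(H_\alpha)$.

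\emph{Stratification.} Since $N_\alpha\subseteq N_\beta$ implies $K_\alpha\subseteq K_\beta$, the sets $H_\alpha=GK_\alpha$ increase with $\alpha$, and $H_0=G$ because $K_0=\{e\}$. Fix $x\in D_G(N_\beta)=H_\beta\setminus G$. Let $\gamma$ be the least ordinal with $x\in H_\gamma$. Then $0<\gamma\leq\beta$.
\begin{itemize}
\item If $\gamma=\delta+1$, then $x\notin H_\delta$, so $x\in S_{\delta+1}$.
\item If $\gamma$ is a limit, then $x\notin\bigcup_{\alpha<\gamma}H_\alpha$, so $x\in S_\gamma$.
\end{itemize}
Conversely, every $S_\gamma$ with $\gamma\leq\beta$ lies in $H_\beta$ and misses $H_0=G$. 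The strata are pairwise disjoint: if $\gamma<\gamma'$, then $S_{\gamma'}$ misses $H_{\gamma}$, which contains $S_\gamma$. This gives the displayed disjoint union.

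\emph{Successor formula.} Apply Theorem~\ref{thm:tower-law} with $N=N_\delta$ and $M=N_{\delta+1}$. It gives
\[
a_\delta^{-1}\bigl(D_{G/N_\delta}(N_{\delta+1}/N_\delta)\bigr)=GK_{\delta+1}\setminus GK_\delta=S_{\delta+1}.
\]

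\emph{Limit continuity.} Put $U=\bigcup_{\alpha<\lambda}N_\alpha$. Each $K_\alpha$ is contained in $\overline U^{\rho G}$, and $N_\lambda=\overline U^{G}\subseteq\overline U^{\rho G}$. Hence
\[
K_\lambda=\overline{N_\lambda}^{\rho G}\subseteq\overline U^{\rho G}\subseteq\overline{\bigcup_{\alpha<\lambda} K_\alpha}^{\rho G}.
\]
The reverse inclusion holds because each $K_\alpha\subseteq K_\lambda$ and $K_\lambda$ is closed.

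\emph{Compactness criterion.} This step needs the most care, because it requires identifying the relevant closures and applying Theorem~\ref{thm:image-dichotomy} in $G/N_\delta$. First, $N_{\delta+1}/N_\delta$ is a continuous image of the precompact group $N_{\delta+1}\subseteq N_\theta$, so it is precompact. By Theorem~\ref{thm:image-dichotomy} applied to $G/N_\delta$, the set $D_{G/N_\delta}(N_{\delta+1}/N_\delta)$ is empty iff $N_{\delta+1}/N_\delta$ is Raikov complete. For a precompact group, Raikov completeness is equivalent to compactness.

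It remains to show that $S_{\delta+1}=\varnothing$ iff $D:=D_{G/N_\delta}(N_{\delta+1}/N_\delta)=\varnothing$. One direction is trivial, since $S_{\delta+1}=a_\delta^{-1}(D)$. For the other, $a_\delta$ is onto because $N_\delta$ is precompact (Proposition~3.7.19 of \cite{ArhangelskiiTkachenko2008}). Hence $a_\delta^{-1}(D)$ is empty iff $D$ is empty. This surjectivity is the one point where precompactness of $N_\theta$ is used essentially, beyond its use to get precompactness of the factor.
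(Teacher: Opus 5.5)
Your proposal is correct and follows the paper's proof essentially step for step. It uses the least-ordinal argument for the stratification, Theorem~\ref{thm:tower-law} for the successor formula, the closure computation at continuous limit stages, and surjectivity of $a_\delta$ together with the fact that a precompact group is Raikov complete exactly when it is compact; your explicit checks of disjointness and of both inclusions at limit stages fill in details the paper leaves implicit.
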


\begin{proof}
The groups $H_\alpha$ form an increasing chain and $H_0=G$. Fix $x\in D_G(N_\beta)=H_\beta\setminus G$. The set of ordinals $\gamma\leq\beta$ for which $x\in H_\gamma$ has a least member, say $\gamma_x>0$. If $\gamma_x=\delta+1$, then $x\in H_{\delta+1}\setminus H_\delta=S_{\gamma_x}$. If $\gamma_x$ is a limit ordinal, then $x$ belongs to $H_{\gamma_x}$ and to none of the preceding groups, so $x\in S_{\gamma_x}$. This gives the disjoint stratification.

The successor formula is Theorem~\ref{thm:tower-law} applied to $N_\delta\subseteq N_{\delta+1}$. At a continuous limit stage,
\[
\begin{aligned}
 K_\lambda
 &=\overline{N_\lambda}^{\rho G}
 =\overline{\bigcup_{\alpha<\lambda}N_\alpha}^{\rho G}
 =\overline{\bigcup_{\alpha<\lambda}K_\alpha}^{\rho G},
\end{aligned}
\]
which proves the limit-stage assertion.

Suppose in addition that $N_\theta$ is precompact, and fix $\delta<\theta$. Then $N_\delta$ and $N_{\delta+1}/N_\delta$ are precompact. Hence $a_\delta$ is onto \cite[Proposition~3.7.19]{ArhangelskiiTkachenko2008}. By the successor formula,
\[
 S_{\delta+1}=\varnothing
 \quad\Longleftrightarrow\quad
 D_{G/N_\delta}(N_{\delta+1}/N_\delta)=\varnothing.
\]
By Theorem~\ref{thm:image-dichotomy}, the latter condition is equivalent to Raikov completeness of $N_{\delta+1}/N_\delta$, and precompactness makes this equivalent to compactness \cite[Theorem~3.7.15]{ArhangelskiiTkachenko2008}. This proves the final assertion.
\end{proof}

For $0<\gamma\leq\theta$, put $P_\gamma=\bigcup_{\alpha<\gamma}H_\alpha$. Because $G\subseteq P_\gamma\subseteq H_\gamma\subseteq\rho G$ and $G$ is dense in $\rho G$, the group $P_\gamma$ is dense in $H_\gamma$ and $H_\gamma$ is dense in $\rho G$. By definition,
\[
 S_\gamma=H_\gamma\setminus P_\gamma.
\]
Hence each nonempty first-entry stratum $S_\gamma$ is a group-remainder of $P_\gamma$ in $H_\gamma$. By the general density theorem for nonempty group-remainders \cite[Proposition~1.2]{ArhangelskiiChoban2018}, it is dense in $H_\gamma$, and therefore in $\rho G$.

Assume again that $N_\theta$ is precompact. For every $\delta<\theta$, the open surjection $a_\delta$ has compact kernel $K_\delta$. The induced topological isomorphism $(\rho G)/K_\delta\cong\rho(G/N_\delta)$ and the compact-subgroup quotient theorem show that $a_\delta$ is perfect \cite[Theorem~1.5.7]{ArhangelskiiTkachenko2008}. By the successor formula, $S_{\delta+1}$ is the full inverse image of $D_{G/N_\delta}(N_{\delta+1}/N_\delta)$ under $a_\delta$. Hence the restriction
\[
 a_\delta|_{S_{\delta+1}}:S_{\delta+1}\longrightarrow
 D_{G/N_\delta}(N_{\delta+1}/N_\delta)
\]
is an open perfect surjection whose fibers are homeomorphic to the compact group $K_\delta\cong\rho N_\delta$. By contrast, limit strata need not be determined by the compactness of the successive factors.

\subsection{Arbitrary stage spectra}

For a tower as in Theorem~\ref{thm:transfinite-tower}, define its stage spectrum by
\[
 \Sigma(N_\bullet)=\{\gamma\in(0,\theta]:S_\gamma\ne\varnothing\}.
\]
Under the precompactness hypothesis in Theorem~\ref{thm:transfinite-tower}, compactness determines whether a successor stratum is empty. At a limit stage, a stratum may be nonempty even when every successive factor is compact.

In this subsection, Boolean groups are written additively.

\begin{theorem}\label{thm:arbitrary-spectrum}
Let $\theta$ be a nonzero ordinal and let $S\subseteq(0,\theta]$. There exist a Hausdorff pseudocompact Boolean group $G$ and a continuous increasing chain $(N_\alpha)_{\alpha\leq\theta}$ of closed normal subgroups of $G$, with $N_0=\{0\}$, such that every $N_\alpha$ is pseudocompact and
\[
 \Sigma(N_\bullet)=S.
\]
For this tower,
\[
 r_\rho(G)=\mathbin{\dot\bigcup}_{\gamma\in S}S_\gamma.
\]
The construction can be arranged so that
\[
 G/N_\theta\cong\mathbb Z/2\mathbb Z.
\]
For every $\delta<\theta$,
\[
 N_{\delta+1}/N_\delta\text{ is compact}
 \quad\Longleftrightarrow\quad
 \delta+1\notin S.
\]
If $S\ne\varnothing$, then $G$ and $N_\theta$ are not Raikov complete.
\end{theorem}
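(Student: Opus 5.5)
The plan is to realize $G$ as a $G_\delta$-dense subgroup of a compact Boolean product $C=(\mathbb Z/2\mathbb Z)^X$, cut out by an algebraic (discontinuous) homomorphism, and to take the tower to be the traces of coordinate subproducts. Concretely, put $X=\{\ast\}\cup(\theta\times\omega_1)$ and write $X_\delta=\{\delta\}\times\omega_1$ for $\delta<\theta$. Let $C_\beta$ be the subproduct of elements supported in $\bigcup_{\delta<\beta}X_\delta$, and $D_\delta$ the subproduct supported in $X_\delta$, so that $C_{\delta+1}=C_\delta\oplus D_\delta$. Let $\Sigma$ be the countable-support subgroup of $C$. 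I will choose a homomorphism $\varphi:C\to V=(\mathbb Z/2\mathbb Z)^S$ with $\Sigma\subseteq\ker\varphi$ and $\varphi(e_\ast)=0$, and set
\[
 G=\varphi^{-1}(F),\qquad N_\beta=G\cap C_\beta,
\]
for a suitable subgroup $F\le V$ (first try $F=0$).

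The topological part is routine once this is set up. Since $\Sigma\cap C_\beta$ is $G_\delta$-dense in $C_\beta$, every subgroup sandwiched between $\Sigma\cap C_\beta$ and $C_\beta$ is $G_\delta$-dense, hence pseudocompact, and its closure is $C_\beta$. In particular $G$ and every $N_\beta$ are pseudocompact, $\rho G=C$, and $K_\beta=C_\beta$. Continuity at a limit $\lambda$ follows because $\bigcup_{\alpha<\lambda}(\Sigma\cap C_\alpha)$ is dense in $C_\lambda$, so the closure in $G$ of $\bigcup_{\alpha<\lambda}N_\alpha$ is $G\cap C_\lambda=N_\lambda$. The identity $G/N_\theta\cong\mathbb Z/2\mathbb Z$ comes from the coordinate $\ast$: $e_\ast\in G\setminus C_\theta$ and $C=C_\theta\oplus\langle e_\ast\rangle$. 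Because $H_\beta=G+C_\beta=\varphi^{-1}\bigl(F+\varphi(C_\beta)\bigr)$, the spectrum is read off from the chain of subgroups $F+\varphi(C_\beta)$ of $V$:
\begin{itemize}
\item the successor stage $\delta+1$ belongs to $\Sigma(N_\bullet)$ iff $\varphi(D_\delta)\not\subseteq F+\varphi(C_\delta)$;
\item a limit $\lambda$ belongs to $\Sigma(N_\bullet)$ iff $F+\varphi(C_\lambda)\ne\bigcup_{\alpha<\lambda}\bigl(F+\varphi(C_\alpha)\bigr)$.
\end{itemize}
The decomposition $r_\rho(G)=\dot\bigcup_{\gamma\in S}S_\gamma$ is then Theorem~\ref{thm:transfinite-tower} with $\beta=\theta$, using $H_\theta=C$. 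The compactness criterion for $N_{\delta+1}/N_\delta$ is the last clause of that theorem, since $N_\theta$ is precompact. Non-completeness when $S\ne\varnothing$ is immediate from $r_\rho(G)\neq\varnothing$ and $D_G(N_\theta)\neq\varnothing$.

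The coordinates of $\varphi$ are built one at a time, using that homomorphisms from a subgroup of a Boolean group to $\mathbb Z/2\mathbb Z$ always extend. For each $\gamma\in S$ fix a witness $x_\gamma$ with uncountable support.
\begin{itemize}
\item If $\gamma=\delta+1$, take $x_\gamma\in D_\delta\setminus\Sigma$.
\item If $\gamma$ is a limit, take $x_\gamma\in C_\gamma$ to be the indicator of $\bigcup_{\delta\in A_\gamma}X_\delta$, where $A_\gamma\subseteq\gamma$ is a cofinal set of limit or non-$S$ indices, so that $x_\gamma\notin\Sigma+\bigcup_{\alpha<\gamma}C_\alpha$.
\end{itemize}
The functional $\varphi_\gamma$ is defined to be $1$ on $x_\gamma$ and $0$ on $\Sigma+\bigcup_{\alpha<\gamma}C_\alpha$, on every $x_{\gamma'}$ with $\gamma'\ne\gamma$, on every $D_\delta$ with $\delta+1\ne\gamma$, and on all of $\prod_{\delta\ge\gamma}$ (blocks at or beyond $\gamma$). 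I then check that $x_\gamma$ avoids the subgroup generated by these. With these choices $e_\gamma\in\varphi(C_\gamma)$, and every element of $\varphi(C_\alpha)$ for $\alpha<\gamma$ has zero $\gamma$-coordinate, so every $\gamma\in S$ lies in the spectrum. Successor stages not in $S$ are absent because all $\varphi_{\gamma'}$ kill $D_\delta$.

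The main obstacle is a limit $\lambda\notin S$ below which $S$ contains cofinally many successors. There a single $c\in C_\lambda$ may have nonzero $\varphi_{\delta+1}$-coordinates for cofinally many $\delta<\lambda$, which would force $\lambda$ into the spectrum when $F=0$. I would first try to prevent this at the level of $\varphi$. The idea is to demand that each successor functional $\varphi_{\delta+1}$, for $\delta$ beyond some point, also kill a fixed cofinal-type subgroup of $C_\lambda$, so that $\varphi(C_\lambda)$ stays inside the union of the $\varphi(C_\alpha)$. Failing that, I would enlarge $F$ to the subgroup of $V$ generated by, for each limit $\lambda\notin S$, the images of elements of $C_\lambda$ supported on a tail of blocks below $\lambda$. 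The point to verify is that this $F$ contains no $e_\gamma$ modulo $\varphi(C_\alpha)$ for $\alpha<\gamma$, so the stages in $S$ survive. That verification, a careful bookkeeping of which tail vectors lie in $F$, is where I expect the real work.
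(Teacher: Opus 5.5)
Your overall design is the paper's: a $G_\delta$-dense kernel of an algebraic homomorphism on a compact Boolean product that contains the countable-support subgroup, with the tower given by traces of coordinate subproducts. Your topological verifications are fine. But the step you defer is a genuine gap, and the construction as you specify it actually fails there.

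Because $\varphi_{\delta+1}$ vanishes on $\Sigma$, on $C_\delta$ and on all blocks beyond $\delta$, its value at $c$ depends only on $c|_{X_\delta}$. Let $\lambda\notin S$ be a limit with cofinally many $\delta+1\in S$ below it, and take $c\in C_\lambda$ with $c|_{X_\delta}=x_{\delta+1}$ for all such $\delta$.
\begin{itemize}
\item Then $\varphi_{\delta+1}(c)=1$ cofinally below $\lambda$.
\item Every vector of $\varphi(C_\alpha)$ with $\alpha<\lambda$ vanishes at all coordinates $\gamma>\alpha$.
\item Hence, with $F=0$, $c\in H_\lambda\setminus\bigcup_{\alpha<\lambda}H_\alpha$, so $\lambda$ enters the spectrum.
\end{itemize}
Your first remedy, asking the $\varphi_{\delta+1}$ to kill a further subgroup of $C_\lambda$, cannot exclude this: the value at $c$ is already forced by the conditions you imposed.

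Your fallback $F$ is too large. The witness $x_{\delta+1}\in D_\delta$ is itself an element of $C_\lambda$ supported on a tail of blocks below $\lambda$. So $e_{\delta+1}=\varphi(x_{\delta+1})\in F$, hence $\varphi(D_\delta)\subseteq F$, and cofinally many stages of $S$ disappear. Already for $\theta=\omega$ and $S=\{1,2,3,\dots\}$ one gets $\varphi(C_\omega)=V$. The $F$ that works there must be an algebraic complement of $\bigoplus_n\mathbb F_2e_n$ in $V$, not a subgroup generated by tail images.

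The missing idea, which is how the paper proceeds, is to define the homomorphism through a basis adapted to the filtration rather than coordinate by coordinate.
\begin{itemize}
\item With $q:C\to C/\Sigma$, put $U_\alpha=q(C_\alpha)$.
\item By transfinite recursion choose bases $\mathcal B_\alpha$ of $U_\alpha$ with $\mathcal B_\alpha\subseteq\mathcal B_\beta$ for $\alpha\le\beta$ and $\bar x_\gamma\in\mathcal B_\gamma$ for $\gamma\in S$. This is possible because $\bar x_\gamma\in U_\gamma\setminus\bigcup_{\alpha<\gamma}U_\alpha$.
\item Send $\bar x_\gamma\mapsto e_\gamma$ in the direct sum $\bigoplus_{\gamma\in S}\mathbb F_2e_\gamma$, and every other basis vector to $0$.
\end{itemize}
This gives $\varphi(C_\alpha)=\operatorname{span}\{e_\gamma:\gamma\in S,\ \gamma\le\alpha\}$ exactly. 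Since vectors in a direct sum have finite support, a limit $\lambda\notin S$ adds nothing new, and no $F$ is needed. The resulting coordinate functional of $\bar x_{\delta+1}$ does not vanish on the blocks beyond $\delta$; that is exactly the constraint in your scheme that must be dropped.

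Your choice of witnesses also needs repair.
\begin{itemize}
\item All your limit witnesses live in the same blocks $X_\delta$, so their independence is not automatic. If $A_{\gamma'}\cap\gamma$ differs from $A_\gamma$ by a bounded set for some limit $\gamma'>\gamma$ in $S$, then $x_\gamma$ lies in the subgroup that $\varphi_\gamma$ must kill.
\item A cofinal subset of $\omega$ consisting of limit or non-$S$ indices does not exist when $\omega\in S$ and $\{1,2,3,\dots\}\subseteq S$.
\end{itemize}
The paper avoids both problems by indexing coordinates by $\theta\times S\times\omega_1$. Each $\gamma\in S$ then has private coordinates, the $x_\gamma$ have disjoint supports, and their independence modulo the countable-support subgroup is immediate.
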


\begin{proof}
If $S=\varnothing$, take $G=\mathbb Z/2\mathbb Z$ and $N_\alpha=\{0\}$ for all $\alpha\leq\theta$. Then $r_\rho(G)=\varnothing$, and all assertions follow. Hence assume $S\ne\varnothing$.

The construction uses a compact Boolean product $K$ and its countable-support subgroup $D$. Since $D$ is $G_\delta$-dense in $K$ and meets each coordinate subproduct in a $G_\delta$-dense subgroup, the groups constructed below are pseudocompact. A linear map on $K/D$ determines which first-entry stages occur.

Let
\[
 I=\theta\times S\times\omega_1,
 \qquad
 K=2^I,
\]
where $2=\mathbb Z/2\mathbb Z$, and equip $K$ with the compact product topology. For $\alpha\leq\theta$, put
\[
 C_\alpha=\{x\in K:x(\xi,s,\eta)=0\text{ whenever }\xi\geq\alpha\}.
\]
Then $C_0=\{0\}$, $C_\theta=K$, and for every nonzero limit $\lambda\leq\theta$,
\[
 C_\lambda=
 \overline{\bigcup_{\alpha<\lambda}C_\alpha}^{\,K}.
\]

Let
\[
 D=\{x\in K:|\operatorname{supp}(x)|\leq\omega\}.
\]
Thus $D$ is the countable-support subgroup of $K$. It is $G_\delta$-dense in $K$. Let $Z\subseteq K$ be a nonempty $G_\delta$ set and write $Z=\bigcap_{n<\omega}O_n$, where each $O_n$ is open in $K$. Choose $z\in Z$. For every $n$, choose a finite set $J_n\subseteq I$ such that the cylinder of all points agreeing with $z$ on $J_n$ is contained in $O_n$. If $J=\bigcup_{n<\omega}J_n$, the point that agrees with $z$ on $J$ and is zero off $J$ belongs to $D\cap Z$. By the Comfort--Ross characterization of pseudocompact totally bounded groups, $D$ is pseudocompact \cite[Theorem~1.2]{ComfortRoss1966}.

Regard $K/D$ as a vector space over $\mathbb F_2$, write $q:K\to K/D$ for the algebraic quotient map, and put $U_\alpha=q(C_\alpha)$. For each $\gamma\in S$, choose $x_\gamma\in C_\gamma$ as follows. If $\gamma=\delta+1$, let
\[
 \operatorname{supp}(x_\gamma)
 =\{(\delta,\gamma,\eta):\eta<\omega_1\}.
\]
If $\gamma$ is a nonzero limit ordinal, let
\[
 \operatorname{supp}(x_\gamma)
 =\{(\xi,\gamma,\eta):\xi<\gamma,\ \eta<\omega_1\}.
\]
Write $\bar x_\gamma=q(x_\gamma)$. For every $\alpha<\gamma$ and every $y\in C_\alpha$, the difference $x_\gamma-y$ has uncountably many nonzero coordinates whose first coordinate is at least $\alpha$. Thus $x_\gamma-y\notin D$, so
\[
 \bar x_\gamma\in U_\gamma
 \setminus\bigcup_{\alpha<\gamma}U_\alpha.                 \tag{3.1}
\]
The supports of the $x_\gamma$ are pairwise disjoint in their second coordinate. Thus every nonempty finite sum of them has uncountable support. Consequently
\[
 \{\bar x_\gamma:\gamma\in S\}
\]
is linearly independent in $K/D$.

By transfinite recursion, choose bases $\mathcal B_\alpha$ of $U_\alpha$ such that
\[
 \mathcal B_\alpha\subseteq\mathcal B_\beta
 \quad(\alpha\leq\beta\leq\theta)
\]
and
\[
 \bar x_\gamma\in\mathcal B_\alpha
 \quad\text{whenever }\gamma\in S\text{ and }\gamma\leq\alpha.       \tag{3.2}
\]
At a successor stage, this follows from (3.1) and basis extension. At a limit stage, the union of the earlier bases spans $\bigcup_{\beta<\alpha}U_\beta$ and remains independent. If $\alpha\in S$, equation (3.1) allows $\bar x_\alpha$ to be adjoined, after which the resulting independent set can be extended to a basis of $U_\alpha$.

Let
\[
 V=\bigoplus_{\gamma\in S}\mathbb F_2 e_\gamma.
\]
Define a linear map $\pi:K/D\to V$ on the basis $\mathcal B_\theta$ by
\[
 \pi(\bar x_\gamma)=e_\gamma\quad(\gamma\in S)
\]
and by $\pi(b)=0$ for every other $b\in\mathcal B_\theta$. Put
\[
 \phi=\pi\circ q:K\longrightarrow V,
 \qquad
 G_0=\ker\phi.
\]
The argument uses $\phi$ only as an algebraic homomorphism. Since $D\subseteq G_0$ and $D$ is $G_\delta$-dense in $K$, the group $G_0$ is $G_\delta$-dense in $K$ and hence pseudocompact by the same Comfort--Ross theorem. In particular, $G_0$ is dense in $K$. The subgroup uniformity is the restriction of the ambient two-sided uniformity, so $K$ is a Raikov completion of $G_0$ \cite[Proposition~1.8.4 and Theorem~3.6.14]{ArhangelskiiTkachenko2008}. Under the completion isomorphism extending the inclusion $G_0\subseteq K$, write $\rho G_0=K$.

For $\alpha\leq\theta$, set
\[
 M_\alpha=G_0\cap C_\alpha.
\]
The subgroup $M_\alpha$ is closed in $G_0$. The subgroup $D\cap C_\alpha$ is $G_\delta$-dense in $C_\alpha$ by the same cylinder argument used for $D$. Since it is contained in $M_\alpha$, the subgroup $M_\alpha$ is $G_\delta$-dense in $C_\alpha$ and is pseudocompact by \cite[Theorem~1.2]{ComfortRoss1966}. In particular,
\[
 \overline{M_\alpha}^{\,K}=C_\alpha.                         \tag{3.3}
\]
If $\lambda\leq\theta$ is a nonzero limit ordinal, every finite-support point of $C_\lambda$ belongs to some $C_\alpha$ with $\alpha<\lambda$. Such points lie in $\bigcup_{\alpha<\lambda}M_\alpha$ and are dense in $C_\lambda$. Since $M_\lambda=G_0\cap C_\lambda$ is closed in $G_0$,
\[
 M_\lambda=
 \overline{\bigcup_{\alpha<\lambda}M_\alpha}^{\,G_0}.       \tag{3.4}
\]
Hence $(M_\alpha)_{\alpha\leq\theta}$ is continuous.

By (3.1)--(3.2) and the definition of the linear map above,
\[
 \phi(C_\alpha)=W_\alpha
 :=\operatorname{span}\{e_\gamma:\gamma\in S,\ \gamma\leq\alpha\}.
                                                                    \tag{3.5}
\]
Equation~(3.5) and $G_0=\ker\phi$ give
\[
 G_0+C_\alpha=\phi^{-1}(W_\alpha).                         \tag{3.6}
\]
Write $H_\alpha^0=G_0+C_\alpha$. If $\gamma=\delta+1$, then (3.5)--(3.6) give
\[
 H_\gamma^0\setminus H_\delta^0\ne\varnothing
 \quad\Longleftrightarrow\quad
 W_\gamma\ne W_\delta
 \quad\Longleftrightarrow\quad
 \gamma\in S.                                               \tag{3.7}
\]
If $\gamma$ is a nonzero limit ordinal, every vector in $V$ has finite support, so
\[
 \bigcup_{\alpha<\gamma}W_\alpha
 =\operatorname{span}\{e_s:s\in S,\ s<\gamma\}.
\]
It follows that
\[
 H_\gamma^0\setminus\bigcup_{\alpha<\gamma}H_\alpha^0\ne\varnothing
 \quad\Longleftrightarrow\quad
 \gamma\in S.                                               \tag{3.8}
\]

To obtain the stated terminal quotient, let $L=\mathbb Z/2\mathbb Z$ with its discrete topology and put
\[
 G=G_0\times L,
 \qquad
 N_\alpha=M_\alpha\times\{0\}.
\]
Since $G_0$ is a subgroup of the Boolean group $K$ and $L$ is Boolean, the group $G$ is Boolean. It is Hausdorff as a subgroup of the compact Hausdorff group $K\times L$, and it is pseudocompact because it is a finite union of copies of the pseudocompact group $G_0$. Each $N_\alpha$ is closed in $G$ because $M_\alpha$ is closed in $G_0$, normal because $G$ is Abelian, and pseudocompact because it is homeomorphic to $M_\alpha$. Since $C_0=\{0\}$, one has $N_0=\{0\}$. Moreover, $G$ is dense in $K\times L$, so the same subgroup-uniformity and completion-uniqueness facts show that $K\times L$ is a Raikov completion of $G$. Thus $\rho G$ may be taken to be $K\times L$, and (3.3) gives
\[
 \overline{N_\alpha}^{\,\rho G}
 =\overline{M_\alpha}^{\,K}\times\{0\}
 =C_\alpha\times\{0\}.
\]
The families $(C_\alpha)_{\alpha\leq\theta}$, $(M_\alpha)_{\alpha\leq\theta}$, and $(N_\alpha)_{\alpha\leq\theta}$ are increasing, and (3.4) shows that the tower $(N_\alpha)_{\alpha\leq\theta}$ is continuous. For this tower, the intermediate groups from Theorem~\ref{thm:transfinite-tower} are
\[
 H_\alpha
 =G+(C_\alpha\times\{0\})
 =(G_0+C_\alpha)\times L
 =H_\alpha^0\times L.
\]
Hence (3.7)--(3.8) show that $\Sigma(N_\bullet)=S$. Since $M_\theta=G_0$,
\[
 G/N_\theta\cong L\cong\mathbb Z/2\mathbb Z.
\]
Since $C_\theta=K$, the displayed formula gives $H_\theta=\rho G$. Hence $D_G(N_\theta)=r_\rho(G)$, and Theorem~\ref{thm:transfinite-tower} together with $\Sigma(N_\bullet)=S$ gives
\[
 r_\rho(G)=\mathbin{\dot\bigcup}_{\gamma\in S}S_\gamma.
\]
In particular, $N_\theta=M_\theta\times\{0\}=G_0\times\{0\}$ is a subgroup of the compact group $K\times L$, so it is precompact. The compactness criterion for successor strata in Theorem~\ref{thm:transfinite-tower} now gives
\[
 N_{\delta+1}/N_\delta\text{ is compact}
 \quad\Longleftrightarrow\quad
 \delta+1\notin S
\]
for every $\delta<\theta$. When $S\ne\varnothing$, the map $\phi$ is nonzero, so $G_0$ is a proper dense subgroup of $K$. Hence $G_0$, $N_\theta\cong G_0$, and $G=G_0\times L$ are not Raikov complete.
\end{proof}

For a nonzero limit ordinal $\lambda\leq\theta$, take $S=\{\lambda\}$ in Theorem~\ref{thm:arbitrary-spectrum}. In the resulting continuous tower, the Boolean group $G$ and every $N_\alpha$ are pseudocompact. Its only nonempty stratum satisfies
\[
 S_\lambda=r_\rho(G)\ne\varnothing
\]
and is dense in $\rho G$. Every successive factor is compact. Thus compactness of all successive factors does not preclude a nonempty limit stratum.

\section{Pseudocompactness and Examples}

\subsection{Pseudocompactness consequences}

For a closed normal subgroup $N$ of $G$, write $\widehat q:\rho G\to\rho(G/N)$ for the completion extension of the quotient homomorphism.

By the locally finite characterization of pseudocompactness \cite[Theorem~3.10.22]{Engelking1989}, an open perfect surjection $f:X\to Y$ between Tychonoff spaces preserves pseudocompactness in both directions. The forward implication follows from preservation under continuous images.

For the reverse implication, assume that $Y$ is pseudocompact and let $\mathcal U$ be a locally finite family of nonempty open subsets of $X$. Since $f$ is open, each $f(U)$ is open. Fix $y\in Y$. Compactness of $f^{-1}(y)$ gives an open neighborhood $W$ of the fiber that meets only finitely many members of $\mathcal U$. Since $f$ is closed, there is an open neighborhood $V$ of $y$ with $f^{-1}(V)\subseteq W$. If $V\cap f(U)\ne\varnothing$, then $f^{-1}(V)\cap U\ne\varnothing$, so $U$ meets $W$. Thus there are only finitely many $U\in\mathcal U$ for which $V\cap f(U)\ne\varnothing$.

Moreover, only finitely many members of $\mathcal U$ can have the same image under $f$. Otherwise, a point in that common image would have a compact fiber meeting infinitely many members of the locally finite family $\mathcal U$. Therefore an infinite $\mathcal U$ would produce an infinite locally finite family of nonempty open subsets of $Y$, contradicting pseudocompactness. Hence $\mathcal U$ is finite, and $X$ is pseudocompact.

The perfectness of quotient maps by compact subgroups is classical \cite[Theorem~1.5.7]{ArhangelskiiTkachenko2008}. Combining this fact with Corollary~\ref{cor:precompact-kernel} gives the following corollary.

\begin{corollary}\label{cor:compact-kernel-perfect}
Let $N$ be a compact normal subgroup of a topological group $G$, and let $\widehat q:\rho G\to\rho(G/N)$ be the completion extension of the quotient homomorphism. Then
\[
 \widehat q|_{r_\rho(G)}:r_\rho(G)\longrightarrow r_\rho(G/N)
\]
is an open perfect surjection. Every fiber is homeomorphic to $N$. Consequently,
\[
 r_\rho(G)\text{ is pseudocompact}
 \quad\Longleftrightarrow\quad
 r_\rho(G/N)\text{ is pseudocompact}.
\]
\end{corollary}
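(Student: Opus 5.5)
The argument rests on Corollary~\ref{cor:precompact-kernel} together with the restriction principle for open perfect maps already used in the proof of Theorem~\ref{thm:canonical-decomposition}(5). Since $N$ is compact, it is precompact and Raikov complete, so $K=\overline N^{\rho G}=N$ and $\widehat q$ is onto. The corollary then gives
\[
 \widehat q\bigl(r_\rho(G)\bigr)=r_\rho(G/N),
\]
so the restricted map is a well-defined surjection onto $r_\rho(G/N)$.

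Next I would show that $r_\rho(G)$ is exactly the full preimage of $r_\rho(G/N)$. Theorem~\ref{thm:image-dichotomy} gives $\widehat q^{-1}(G/N)=GK=GN=G$. Hence
\[
 \widehat q^{-1}\bigl(r_\rho(G/N)\bigr)=\rho G\setminus G=r_\rho(G).
\]
This is the case $T_N=r_\rho(G)$ of Theorem~\ref{thm:canonical-decomposition}, since $L_N=\varnothing$.

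For openness and perfectness I would argue as follows.
\begin{itemize}
\item The map $\widehat q:\rho G\to\rho(G/N)$ is an open surjective homomorphism with compact kernel $N$.
\item It induces a topological isomorphism $(\rho G)/N\cong\rho(G/N)$.
\item By the compact-subgroup quotient theorem \cite[Theorem~1.5.7]{ArhangelskiiTkachenko2008}, $\widehat q$ is therefore perfect.
\item The restriction of an open perfect map to the full preimage of a subspace is again open and perfect.
\end{itemize}
For the last point, I would check the three properties directly. Openness holds because $\widehat q(O\cap f^{-1}(Y'))=\widehat q(O)\cap Y'$. Closedness holds because a closed subset $F\cap f^{-1}(Y')$ maps to $\widehat q(F)\cap Y'$. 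The fibers are unchanged, so they remain compact.

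Each fiber over $y\in r_\rho(G/N)$ is a coset $xN$, which is homeomorphic to $N$ by translation; this is item~2 of Theorem~\ref{thm:image-dichotomy} with $\rho N=N$. The pseudocompactness equivalence then follows from the fact proved just before the corollary: open perfect surjections between Tychonoff spaces preserve pseudocompactness in both directions. The remainders are Tychonoff because they are subspaces of topological groups.

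The only delicate point is the degenerate case where the remainders are empty. If $G/N$ is Raikov complete, both $r_\rho(G)$ and $r_\rho(G/N)$ are empty, and the statement holds trivially under the paper's convention that the empty space is pseudocompact. Apart from that, the main step is verifying that the restricted map is closed, which the full-preimage identity reduces to a routine check.
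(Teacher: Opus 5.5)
Your proposal is correct and follows essentially the same route as the paper: surjectivity of $\widehat q$, the identity $\widehat q^{-1}(G/N)=GN=G$ showing that $r_\rho(G)$ is the full preimage of $r_\rho(G/N)$, perfectness of $\widehat q$ from its compact kernel via \cite[Theorem~1.5.7]{ArhangelskiiTkachenko2008}, restriction to that full preimage, coset fibers, and two-way preservation of pseudocompactness under open perfect surjections. Your direct check that open perfect maps restrict well to full preimages, and your remark on the empty case, spell out details that the paper leaves implicit.
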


\begin{proof}
Since $N$ is compact, $\widehat q$ is onto by \cite[Proposition~3.7.19]{ArhangelskiiTkachenko2008}. As $\rho G$ is Hausdorff, $N$ is also closed in $\rho G$, so $\overline N^{\rho G}=N$. Theorem~\ref{thm:image-dichotomy} now gives
\[
 \widehat q^{-1}(G/N)=G.
\]
Therefore
\[
 r_\rho(G)=\widehat q^{-1}\bigl(r_\rho(G/N)\bigr).
\]
The map $\widehat q$ is open by the classical completion theorem \cite[Theorem~3.6.19]{ArhangelskiiTkachenko2008}. Since it is onto with compact kernel $N$, the induced topological isomorphism $(\rho G)/N\cong\rho(G/N)$ and the compact-subgroup quotient theorem show that $\widehat q$ is perfect \cite[Theorem~1.5.7]{ArhangelskiiTkachenko2008}. Its restriction to this full inverse image remains open and perfect. The fiber formula in Theorem~\ref{thm:image-dichotomy} shows that every fiber is a coset of $N$. The final equivalence follows because open perfect maps preserve pseudocompactness in both directions.
\end{proof}

The classical density theorem for nonempty group-remainders \cite[Proposition~1.2]{ArhangelskiiChoban2018}, together with the compactness theorem for Raikov completions of pseudocompact groups \cite[Corollary~3.7.18]{ArhangelskiiTkachenko2008}, gives the following consequence.

\begin{corollary}\label{cor:pseudocompact-source}
Let $G$ be a topological group. If $r_\rho(G)$ is nonempty and pseudocompact, then $\rho G$ is compact and $G$ is precompact. Consequently, if $N$ is any closed normal subgroup of $G$ and $\widehat q:\rho G\to\rho(G/N)$ is the completion extension of the quotient homomorphism, then
\[
 \widehat q\bigl(r_\rho(G)\bigr)=
 \begin{cases}
 r_\rho(G/N),&\text{if $N$ is compact},\\[4pt]
 \rho(G/N),&\text{if $N$ is not compact}.
 \end{cases}
\]
If $N$ is compact, then Corollary~\ref{cor:compact-kernel-perfect} gives the stronger equivalence between pseudocompactness of $r_\rho(G)$ and $r_\rho(G/N)$.
\end{corollary}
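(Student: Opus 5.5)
The statement to be proved is Corollary~\ref{cor:pseudocompact-source}. The plan splits into two parts. First we show that $\rho G$ is compact and $G$ is precompact. Then we apply Corollary~\ref{cor:precompact-kernel} after checking that every closed normal subgroup of $G$ is precompact.

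For the first part, suppose $r_\rho(G)=\rho G\setminus G$ is nonempty and pseudocompact.
\begin{enumerate}
\item Since $G$ is a dense subgroup of $\rho G$, the set $r_\rho(G)$ is a nonempty group-remainder of $G$ in $\rho G$.
\item By \cite[Proposition~1.2]{ArhangelskiiChoban2018}, $r_\rho(G)$ is dense in $\rho G$.
\item A Tychonoff space containing a dense pseudocompact subspace is pseudocompact. To see this, let $f:\rho G\to\mathbb R$ be continuous. Then $f$ is bounded on the dense set $r_\rho(G)$, and by continuity it is bounded on its closure, which is $\rho G$.
\item Hence $\rho G$ is a pseudocompact group. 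It is also Raikov complete, so it is its own Raikov completion.
\item By \cite[Corollary~3.7.18]{ArhangelskiiTkachenko2008}, the Raikov completion of a pseudocompact group is compact. Applied to $\rho G$, this shows that $\rho G$ is compact.
\item $G$ is a subgroup of the compact group $\rho G$, so $G$ is precompact.
\end{enumerate}

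For the second part, let $N$ be a closed normal subgroup of $G$. Then $N$ is a subgroup of the precompact group $G$, hence a subgroup of the compact group $\rho G$, so $N$ is precompact. Corollary~\ref{cor:precompact-kernel} now applies directly and gives the displayed two-case formula for $\widehat q\bigl(r_\rho(G)\bigr)$:
\begin{itemize}
\item the image is $r_\rho(G/N)$ when $N$ is compact;
\item the image is all of $\rho(G/N)$ when $N$ is not compact.
\end{itemize}

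The final sentence of the statement needs no new argument. When $N$ is compact, the equivalence between pseudocompactness of $r_\rho(G)$ and of $r_\rho(G/N)$ is exactly Corollary~\ref{cor:compact-kernel-perfect}.

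The only step needing care is step 5. The cited result concerns the completion of a pseudocompact group, so we must check that applying it to $\rho G$ itself is legitimate. It is, because a Raikov-complete group coincides with its own Raikov completion. An alternative route avoids this: a pseudocompact group is precompact, so $\rho G$ is precompact, and a precompact Raikov-complete group is compact by \cite[Theorem~3.7.15]{ArhangelskiiTkachenko2008}.
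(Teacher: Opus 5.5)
Your proposal is correct and follows essentially the same route as the paper: the nonempty group-remainder is dense by \cite[Proposition~1.2]{ArhangelskiiChoban2018}, so $\rho G$ is pseudocompact, hence compact by \cite[Corollary~3.7.18]{ArhangelskiiTkachenko2008} since $\rho G$ is already Raikov complete, and Corollary~\ref{cor:precompact-kernel} then applies to the precompact kernel $N$. Your alternative justification of step~5, via precompactness of pseudocompact groups and \cite[Theorem~3.7.15]{ArhangelskiiTkachenko2008}, is also valid.
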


\begin{proof}
The space $r_\rho(G)$ is dense in $\rho G$ by \cite[Proposition~1.2]{ArhangelskiiChoban2018}. Let $f:\rho G\to\mathbb R$ be continuous. Its restriction to $r_\rho(G)$ is bounded. Density gives
\[
 f(\rho G)\subseteq\overline{f(r_\rho(G))},
\]
so $\rho G$ is pseudocompact. The Raikov completion of every pseudocompact topological group is compact \cite[Corollary~3.7.18]{ArhangelskiiTkachenko2008}. Since $\rho G$ is already Raikov complete, it is compact. Therefore $G$ is precompact. Every subgroup of a precompact group is precompact, so Corollary~\ref{cor:precompact-kernel} gives the displayed dichotomy. The compact-kernel assertion follows from Corollary~\ref{cor:compact-kernel-perfect}.
\end{proof}

For small precompact groups, the image dichotomy does not determine whether $r_\rho(G/N)$ is pseudocompact when the kernel is not Raikov complete. Arhangel'skii and Bella state in their abstract that every nonmetrizable topological group of cardinality at most $\omega_1$ is pseudocompact at infinity \cite{ArhangelskiiBella2001}. Combining this statement with the elementary metrizable case gives the following criterion for small precompact groups.

\begin{proposition}\label{prop:small-classification}
Let $H$ be a precompact topological group with $|H|\leq\omega_1$. Then $r_\rho(H)$ is pseudocompact if and only if $H$ is nonmetrizable or Raikov complete.
\end{proposition}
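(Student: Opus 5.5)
The statement splits into the metrizable and nonmetrizable cases, combined with the Raikov-complete case. Since $H$ is precompact, $\rho H$ is compact by \cite[Theorem~3.7.15]{ArhangelskiiTkachenko2008}, and $H$ is Raikov complete exactly when $H$ is compact, in which case $r_\rho(H)=\varnothing$. The empty space is regarded as pseudocompact, so the Raikov-complete case is immediate. From now on I assume $H$ is not compact, so that $r_\rho(H)$ is a nonempty dense subset of $\rho H$ by \cite[Proposition~1.2]{ArhangelskiiChoban2018}.

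\emph{Nonmetrizable case.} The Arhangel'skii--Bella statement \cite{ArhangelskiiBella2001} applies, since $H$ is nonmetrizable with $|H|\leq\omega_1$. It says that every compactification remainder of $H$ is pseudocompact. Because $\rho H$ is a compact Hausdorff space containing $H$ densely, it is a compactification of $H$, and its remainder is $r_\rho(H)$. Hence $r_\rho(H)$ is pseudocompact. The only point to check is that the Raikov completion qualifies as a compactification in their sense, and it does because it is a compact Hausdorff extension in which $H$ is dense.

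\emph{Metrizable, noncompact case.} This is the elementary direction, where I must show that $r_\rho(H)$ is not pseudocompact.
\begin{enumerate}
\item Since $H$ is metrizable, $\rho H$ is a compact metrizable group.
\item Because $H$ is not compact, it is a proper dense subgroup of $\rho H$, and $r_\rho(H)=\rho H\setminus H$ is a nonempty metrizable space.
\item A metrizable pseudocompact space is compact. If $r_\rho(H)$ were pseudocompact, it would therefore be compact and hence closed in $\rho H$.
\item By density, $r_\rho(H)$ would then equal all of $\rho H$, contradicting $H\neq\varnothing$.
\end{enumerate}
So $r_\rho(H)$ is not pseudocompact. Together with the Raikov-complete case, this gives the equivalence.

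The main obstacle is the dependence on the Arhangel'skii--Bella theorem as quoted from their abstract. Everything else is routine. Note that the hypothesis $|H|\leq\omega_1$ is used only in the nonmetrizable case.
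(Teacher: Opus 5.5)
Your proposal is correct and follows the paper's proof: it uses the same three-way split into the Raikov-complete case (the remainder is empty), the nonmetrizable case (Arhangel'skii--Bella applied to the compactification $\rho H$), and the metrizable non-complete case (a pseudocompact metrizable space is compact). The only cosmetic difference is the final contradiction, which you get from the density of the nonempty remainder via Arhangel'skii--Choban, whereas the paper argues that a compact remainder would make $H$ open, hence closed, contradicting proper density.
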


\begin{proof}
If $H$ is nonmetrizable, then the cited result of Arhangel'skii and Bella shows that $H$ is pseudocompact at infinity. Since $\rho H$ is a compactification of the precompact group $H$, the remainder $r_\rho(H)$ is pseudocompact.

If $H$ is Raikov complete, then precompactness makes $H$ compact, so $r_\rho(H)=\varnothing$.

Suppose that $H$ is metrizable and not Raikov complete. Since $H$ is also precompact, its Raikov completion $\rho H$ is compact and metrizable. The group $H$ is a proper dense subgroup of $\rho H$. The remainder $\rho H\setminus H$ is metrizable and noncompact. If it were compact, it would be closed in $\rho H$, forcing $H$ to be open and hence closed, contrary to proper density. A pseudocompact metrizable space is compact. Hence $r_\rho(H)$ is not pseudocompact.
\end{proof}

\subsection{Examples with countable precompact Abelian groups}\label{sec:examples}

For a discrete Abelian group $D$, let $D^\#$ denote $D$ equipped with its Bohr topology. Then $D^\#$ is Hausdorff and precompact \cite[Corollary~9.9.6]{ArhangelskiiTkachenko2008}. Put
\[
 A=\mathbb Z^\#.
\]
Every compact subset of $A$ is finite \cite[Theorem~9.9.30]{ArhangelskiiTkachenko2008}. Since $A$ is infinite and precompact, it is not discrete. If $A$ were metrizable, non-discreteness and first countability at the identity would give a sequence of distinct points converging to the identity. That sequence together with its limit would be an infinite compact subset of $A$, a contradiction. Hence $A$ is nonmetrizable. If $A$ were Raikov complete, precompactness would make it compact \cite[Theorem~3.7.15]{ArhangelskiiTkachenko2008}, again contradicting the finiteness of compact subsets. Thus $A$ is not Raikov complete.

Choose an irrational number $t$ and let
\[
 B=\{e^{2\pi i nt}:n\in\mathbb Z\}
\]
be equipped with the subspace topology inherited from the circle group $\mathbb T$. Since $t$ is irrational, $B$ is dense in $\mathbb T$. Thus $B$ is countable, metrizable, precompact, and not Raikov complete, and $\mathbb T$ is a Raikov completion of $B$. Write $\rho A$ for the Raikov completion of $A$, and use additive notation for $A$ and multiplicative notation for $B$.

\begin{example}\label{ex:preserve}
Let
\[
 G=A\times A,
 \qquad
 N=A\times\{0\}.
\]
Then $G$ is countable, precompact, Abelian, nonmetrizable, and not Raikov complete. Hence $r_\rho(G)\ne\varnothing$, and Proposition~\ref{prop:small-classification} shows that it is pseudocompact. The subgroup $N$ is closed and normal, precompact, and noncompact. Corollary~\ref{cor:precompact-kernel} gives
\[
 \widehat q\bigl(r_\rho(G)\bigr)=\rho(G/N).
\]
On the other hand,
\[
 G/N\cong A,
\]
so $r_\rho(G/N)$ is pseudocompact by Proposition~\ref{prop:small-classification}. The decomposition in Theorem~\ref{thm:canonical-decomposition} becomes
\[
 L_N=(\rho A\setminus A)\times A
 \qquad\text{and}\qquad
 T_N=\rho A\times(\rho A\setminus A).
\]
Both sets are dense in $\rho A\times\rho A$ \cite[Proposition~1.2]{ArhangelskiiChoban2018}, and Theorem~\ref{thm:canonical-decomposition}(5) shows that $T_N$ maps onto $r_\rho(A)$ by an open perfect map.
\end{example}

\begin{example}\label{ex:destroy}
Let
\[
 G=A\times B,
 \qquad
 N=A\times\{1\}.
\]
Again $G$ is countable, precompact, Abelian, nonmetrizable, and not Raikov complete. Hence $r_\rho(G)\ne\varnothing$, and Proposition~\ref{prop:small-classification} shows that it is pseudocompact. The subgroup $N$ is closed and normal, precompact, and noncompact, so
\[
 \widehat q\bigl(r_\rho(G)\bigr)=\rho(G/N)
\]
by Corollary~\ref{cor:precompact-kernel}. Since
\[
 G/N\cong B,
\]
the quotient is metrizable and not Raikov complete. Proposition~\ref{prop:small-classification}, applied to $G/N$, shows that $r_\rho(G/N)$ is not pseudocompact.

Equivalently,
\[
 r_\rho(G/N)\cong\mathbb T\setminus B.
\]
This space is metrizable and noncompact. If it were compact, it would be closed in $\mathbb T$, so the proper dense subgroup $B$ would be open and therefore closed, a contradiction. Here the canonical decomposition is
\[
 L_N=(\rho A\setminus A)\times B
 \qquad\text{and}\qquad
 T_N=\rho A\times(\mathbb T\setminus B).
\]
Again both parts are dense in $\rho G$ \cite[Proposition~1.2]{ArhangelskiiChoban2018}. The difference from Example~\ref{ex:preserve} is reflected in $r_\rho(G/N)$. By Theorem~\ref{thm:canonical-decomposition}(5), the restriction $\widehat q|_{T_N}:T_N\to\mathbb T\setminus B$ is an open perfect surjection. Hence $T_N$ is not pseudocompact.
\end{example}

In both examples the kernel is precompact and not Raikov complete, so $\widehat q(r_\rho(G))=\rho(G/N)$. What differs is the quotient group. In Example~\ref{ex:preserve} it is nonmetrizable, while in Example~\ref{ex:destroy} it is metrizable and not Raikov complete.

\begin{example}
Let $A=\mathbb Z^\#$ and let $B$ be the dense cyclic subgroup of $\mathbb T$ used above. Put
\[
 G=A\times A\times B,
\]
and consider the closed normal chain
\[
 N_0=\{0\}\times\{0\}\times\{1\}
 \subseteq
 N_1=A\times\{0\}\times\{1\}
 \subseteq
 N_2=A\times A\times\{1\}.
\]
The terminal subgroup $N_2\cong A\times A$ is precompact, and both successive factors $N_1/N_0\cong A$ and $N_2/N_1\cong A$ are noncompact. The compactness criterion in Theorem~\ref{thm:transfinite-tower} therefore gives two nonempty disjoint first-entry strata
\[
 S_1=GK_1\setminus G,
 \qquad
 S_2=GK_2\setminus GK_1,
\]
and
\[
 D_G(N_2)=S_1\mathbin{\dot\cup}S_2.
\]
Each $S_i$ is dense in $\rho G$ \cite[Proposition~1.2]{ArhangelskiiChoban2018}. The final quotient satisfies
\[
 G/N_2\cong B,
\]
so its Raikov remainder is not pseudocompact. Meanwhile, $G$ is countable, precompact, nonmetrizable, and not Raikov complete. Hence $r_\rho(G)\ne\varnothing$, and Proposition~\ref{prop:small-classification} shows that it is pseudocompact. Thus two nonempty strata occur in $D_G(N_2)$ even though $r_\rho(G/N_2)$ is not pseudocompact.
\end{example}

Even when an Abelian group is Raikov complete, one of its quotients may have a non-pseudocompact Raikov remainder. In Problem~7.9.B, Arhangel'skii and Tkachenko ask the reader to show that every Abelian topological group is a quotient of a Raikov-complete Abelian topological group \cite[Problem~7.9.B]{ArhangelskiiTkachenko2008}. Applying this statement to $\mathbb Q$ with its usual topology gives a Raikov-complete Abelian group $H$ and a closed normal subgroup $N$ such that $H/N\cong\mathbb Q$. Since $\mathbb R$ is a Raikov completion of $\mathbb Q$, one has $r_\rho(H)=\varnothing$ while $r_\rho(H/N)$ is homeomorphic to $\mathbb R\setminus\mathbb Q$, which is not pseudocompact. In contrast, Example~\ref{ex:destroy} has $r_\rho(G)$ nonempty and pseudocompact while $r_\rho(G/N)$ is not pseudocompact.

The closure of a normal subgroup in $\rho G$ determines the fibers of the quotient extension that meet $r_\rho(G)$. For precompact kernels, these fibers give the canonical decomposition $r_\rho(G)=L_N\mathbin{\dot\cup}T_N$. For $N\subseteq M$, the two-stage identity decomposes $D_G(M)$ into $D_G(N)$ and $\widehat q_N^{-1}(D_{G/N}(M/N))$. Its transfinite iteration describes the first-entry strata along a normal tower.

When the terminal subgroup is precompact, a successor stratum is empty exactly when the corresponding factor is compact. Limit strata behave differently. Every subset of the nonzero stages is realized as the stage spectrum of a continuous tower $(N_\alpha)$ in a pseudocompact Boolean group $G$, with every $N_\alpha$ pseudocompact and $G/N_\theta\cong\mathbb Z/2\mathbb Z$. For compact $N$, pseudocompactness of $r_\rho(G)$ and $r_\rho(G/N)$ is equivalent. In the examples with countable precompact Abelian groups and noncompact kernels, $r_\rho(G)$ is pseudocompact and $\widehat q(r_\rho(G))=\rho(G/N)$. The remainder $r_\rho(G/N)$ is pseudocompact in one example and not in the other.

\section{Further Questions}

The results above lead to three questions.

\begin{enumerate}
\item Let $N$ be a closed normal precompact noncompact subgroup of $G$. Find useful conditions under which the set
\[
 L_N=\widehat q^{-1}(G/N)\setminus G
\]
is pseudocompact or countably compact. In particular, determine to what extent these properties can be inferred from $G/N$ and $r_\rho(N)$. The restriction $\widehat q|_{L_N}:L_N\to G/N$ is a continuous open surjection, and every fiber is homeomorphic to $r_\rho(N)$. Since $N$ is precompact and noncompact, $\rho N$ is compact and $N$ is a proper dense subgroup. Thus $r_\rho(N)$ is noncompact. If it were compact, it would be closed in $\rho N$, making $N$ an open subgroup and hence closed, contrary to proper density. Therefore the map is not perfect.

\item Characterize continuous chains of closed normal subgroups for which every limit stratum in Theorem~\ref{thm:transfinite-tower} is empty. Theorem~\ref{thm:arbitrary-spectrum} shows that compactness of all successive factors is not sufficient. Such a criterion should account for the closure of $\bigcup_{\alpha<\lambda}K_\alpha$ in $\rho G$ at each limit stage $\lambda$, in addition to continuity of the subgroup chain inside $G$.

\item Does there exist a topological group $G$ and a closed normal subgroup $N$ such that $r_\rho(G)$ is nonempty and countably compact while $r_\rho(G/N)$ is not countably compact? Any such example must have $N$ noncompact. Countable compactness of $r_\rho(G)$ implies pseudocompactness, so Corollary~\ref{cor:pseudocompact-source} makes $G$ precompact. If $N$ were compact, Corollary~\ref{cor:compact-kernel-perfect} would make $r_\rho(G/N)$ a continuous image of $r_\rho(G)$, and hence countably compact. Consequently, $G/N$ is precompact and $\rho(G/N)$ is compact in any such example. Arhangel'skii and Bella state in their abstract that pseudocompactness at infinity need not imply countable compactness at infinity even for countable groups \cite{ArhangelskiiBella2001}.
\end{enumerate}

\end{document}